\newtheorem{theorem}{Theorem}
\theoremstyle{remark}
\newtheorem{rem}{Remark}
\begin{document}

\title{Nonexistence of shrinkers for the harmonic map flow in higher dimensions}
\author{Piotr Bizo\'n}
\address{Institute of Physics, Jagiellonian
University, Krak\'ow, Poland\\ and
Max Planck Institute for Gravitational Physics (Albert Einstein Institute),
Golm, Germany}
\email{piotr.bizon@aei.mpg.de}

\author{Arthur Wasserman}
\address{Department of Mathematics, University of Michigan, Ann Arbor, MI 48109, USA}
\email{awass@umich.edu}
\thanks{The research of P.B. was supported in part by the NCN grant NN202 030740}%
\date{\today}%
\begin{abstract}
We prove that the harmonic map flow from the Euclidean space $\mathbb{R}^d$ into the sphere $S^d$ has no  equivariant self-similar  shrinking solutions in dimensions $d\geq 7$.
\end{abstract}
\maketitle

This note is concerned with the harmonic map flow for maps $u$  from the Euclidean space $\mathbb{R}^d$ to the sphere $S^d$. This flow, defined as  the gradient flow for the Dirichlet energy,
\begin{equation}\label{energy}
  E(u) = \int_{\mathbb{R}^d} |\nabla u|^2\,,
\end{equation}
 obeys the nonlinear heat equation
\begin{equation}\label{eqheat}
  u_t = \Delta u + |\nabla u|^2 u\,,
\end{equation}
where $(t,x)\in  \mathbb{R} \times \mathbb{R}^d$ and $u(t,x)\in S^d \hookrightarrow \mathbb{R}^{d+1}$.
 This equation is scale invariant: if $u(t,x)$ is a solution, so is $u_{\lambda}(t,x)=u(t/\lambda^2,x/\lambda)$. Under this scaling $E(u_{\lambda})= \lambda^{d-2} E(u)$ which means that $d=2$ is the critical dimension and higher dimensions are supercritical.

 We consider equivariant maps of the form (where $r=|x|$)
\begin{equation}\label{ansatz}
  u(t,x)=\left(\frac{x}{r}\,\sin{v(t,r)},\cos{v(t,r)}\right)\,.
\end{equation}
This symmetry assumption  reduces Eq.\eqref{eqheat} to the scalar heat equation
\begin{equation}\label{eqv}
  v_t = v_{rr} +\frac{d-1}{r}\, v_r -\frac{d-1}{2 r^2}\, \sin(2v)\,.
\end{equation}
A natural question, important for understanding the global behavior of solutions and formation of singularities, is whether there exist solutions of Eq.\eqref{eqv} which are invariant under scaling, i.e. $v(t/\lambda^2,r/\lambda)=v(t,r)$.
 Such self-similar solutions come in two kinds: self-similar expanding solutions (expanders for short) of the form
\begin{equation}
v(t,r)=g\left(\frac{r}{\sqrt{t}}\right), \quad  t>0\,,
\end{equation}
and
self-similar shrinking solutions (shrinkers for short) of the form
\begin{equation}\label{shrinker}
v(t,r)=f\left(\frac{r}{\sqrt{-t}}\right), \quad  t<0\,.
\end{equation}
Expanders are easy to construct in any dimension and well understood (see \cite{gr} and \cite{bb}) so here we will consider only shrinkers.  Substituting the ansatz \eqref{shrinker} into Eq.\eqref{eqv} and using the similarity variable $y=r/\sqrt{-t}$  we get an ordinary differential equation for $f(y)$ on $y\geq 0$
\begin{equation}\label{eqf}
   f''+\left(\frac{d-1}{y}-\frac{y}{2}\right)\,f'-\frac{d-1}{2y^2}\,\sin(2f)=0\,.
\end{equation}
It is routine to show that both near the center and near infinity there exist  one-parameter families of local smooth solutions satisfying
\begin{equation}\label{bc0}
  f(0)=0\,,\quad f'(0)=a>0\,,
\end{equation}
and
\begin{equation}\label{bcinf}
  f(\infty)=b\,,\quad \lim_{y\rightarrow \infty} y^3 f'(y)=-(d-1) \sin(2b)\,,
\end{equation}
where $a$ and $b$ are free parameters. The assumption that $a>0$ is made for later convenience (without loss of generality).
The question is: do there exist global smooth solutions satisfying the conditions \eqref{bc0} and \eqref{bcinf}? This question has been answered in affirmative for $3\leq d \leq 6$ by Fan \cite{fan}. Using a shooting method, Fan proved that there exists a countable sequence of pairs $(a_n,b_n)$ for which the local solutions satisfying \eqref{bc0} and \eqref{bcinf}
are smoothly connected by a globally regular solution $f_n(y)$. The positive integer $n$ denotes the number of intersections of the solution $f_n(y)$ with $\pi/2$. More detailed quantitative properties of the shrinkers were studied in \cite{bb}.

\begin{rem} To justify the conditions \eqref{bc0} and \eqref{bcinf}, let us recall that singularities of the harmonic map flow have been divided by Struwe \cite{s1}  into two types depending on whether  the quantity $(-t) |\nabla u|^2$ remains bounded (type I) or not (type II) as $t\nearrow 0$ (here we assume without loss of generality that the blowup occurs at time $t=0$). Calculating this quantity for the equivariant ansatz \eqref{ansatz} and \eqref{shrinker}  one finds that the blowup is of type I if and only if
\begin{equation}\label{type1}
 f'(y)^2+\frac{d-1}{y^2}\,\sin^2{\!f(y)} < C
\end{equation}
for some constant $C$ and all $y\geq 0$. The condition \eqref{type1} together with the requirement of smoothness is equivalent to
the conditions \eqref{bc0} and \eqref{bcinf}. In the case of \eqref{bc0} this is evident. To see how \eqref{bcinf} comes about, let us rewrite Eq.\eqref{eqf} in the integral form
\begin{equation*}\label{hop}
 f'(y)= \frac{d-1}{2} y^{1-d} e^{y^2/4} A(y), \quad  A(y)=\int_0^y s^{d-3} e^{-s^2/4} \sin(2f(s)) ds\,.
\end{equation*}
 For $f'(y)$ to be bounded at infinity, it is necessary that $\lim_{y\rightarrow\infty} A(y)= 0$ and  then   \eqref{bcinf} follows from  l'H\^{o}pital's rule. Thus, the conditions \eqref{bc0} and \eqref{bcinf} are equivalent to the requirement that the blowup mediated by the shrinker \eqref{shrinker} is of type I.
\end{rem}

 One of the key ingredients of the shooting argument in \cite{fan} is that the linearized perturbations about the equator map $f=\pi/2$ are oscillating at infinity. This happens for $d^2-8d+8<0$ which implies the upper bound $d=6= \left \lfloor{4+2\sqrt{2}}\right \rfloor $ (of course, only integer values of $d$ make sense geometrically). There is numerical evidence that there are no smooth shrinkers for $d\geq 7$, however to our knowledge this fact has not been proved. The aim of this note is to fill this gap by  proving the following non-existence result:
 \begin{theorem}
 For $d\geq 7$ there exists no  smooth solution of equation \eqref{eqf}  satisfying the conditions \eqref{bc0} and \eqref{bcinf}.
 \end{theorem}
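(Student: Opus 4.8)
The plan is to convert the boundary-value problem into a single quadratic-form identity for the solution and then contradict it with a Hardy-type inequality whose sharp constant degenerates exactly at $d^{2}-8d+8=0$. First I would put \eqref{eqf} in self-adjoint form, writing $p(y)=y^{d-1}e^{-y^{2}/4}$ so that $(p\,f')'=\tfrac{d-1}{2}\,p\,y^{-2}\sin(2f)$, and record the quantity $A$ from the Remark, for which $\operatorname{sign}f'=\operatorname{sign}A$ and $A'=y^{d-3}e^{-y^{2}/4}\sin(2f)$; the condition \eqref{bcinf} forces $A(\infty)=0$. A short argument then shows any admissible $f$ must meet the equator: if $0<f<\pi/2$ on all of $(0,\infty)$ then $\sin(2f)>0$, so $A$ is strictly increasing and $A(\infty)>0$, whence $f'=\tfrac{d-1}{2}y^{1-d}e^{y^{2}/4}A\to\infty$, contradicting $f(\infty)=b$. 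Hence $f$ crosses $\pi/2$ at some first point $y_{1}>0$, and I would set $w=f-\pi/2$.

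The core is an energy identity past the crossing. Multiplying $(p\,w')'=-\tfrac{d-1}{2}\,p\,y^{-2}\sin(2w)$ by $w$ and integrating over $(y_{1},\infty)$, the boundary terms vanish because $w(y_{1})=0$ and $p\,w'w\to0$ at infinity, giving $\int_{y_{1}}^{\infty}p\,(w')^{2}\,dy=\tfrac{d-1}{2}\int_{y_{1}}^{\infty}p\,y^{-2}\,w\sin(2w)\,dy$. Provided $f$ stays below $\pi$ there (so that $|w|<\pi/2$ and hence $w\sin(2w)$ has fixed sign and obeys the strict bound $w\sin(2w)<2w^{2}$ for $w\neq0$), this yields $\int_{y_{1}}^{\infty}p\,(w')^{2}\,dy<(d-1)\int_{y_{1}}^{\infty}p\,y^{-2}\,w^{2}\,dy$. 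The same computation runs between any two consecutive crossings of $\pi/2$, so a genuine shrinker would always produce a nonzero $w$ vanishing at the finite endpoint(s) whose weighted Dirichlet energy is strictly less than $(d-1)$ times its weighted $L^{2}(y^{-2})$-norm.

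I would then contradict this with the matching Hardy inequality. In the unweighted case the sharp constant in $\int y^{d-1}(w')^{2}\ge C\int y^{d-3}w^{2}$ with Dirichlet data is $C=\big(\tfrac{d-2}{2}\big)^{2}$, and $\big(\tfrac{d-2}{2}\big)^{2}\ge d-1$ is equivalent to $(d-2)^{2}\ge 4(d-1)$, i.e. to $d^{2}-8d+8\ge0$, i.e. to $d\ge7$. Thus for $d\ge7$ one expects $\int_{y_{1}}^{\infty}p\,(w')^{2}\ge(d-1)\int_{y_{1}}^{\infty}p\,y^{-2}\,w^{2}$, in direct contradiction with the strict inequality above, so the equator is unreachable and no shrinker exists. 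The cleanest way to organize this is the ground-state substitution $w=\phi v$: if the equator-linearized operator admits a positive solution $\phi$ of $(p\,\phi')'+(d-1)\,p\,y^{-2}\,\phi=0$ on the relevant interval, then $\int p\,(w')^{2}-(d-1)\int p\,y^{-2}w^{2}=\int p\,\phi^{2}(v')^{2}\ge0$, with equality only for $w\propto\phi$, which is incompatible with the Dirichlet condition. This is exactly the non-oscillation of the equator linearization flagged in the introduction, governed by the same discriminant $d^{2}-8d+8$.

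The hard part will be turning this non-oscillation into the positivity actually required, namely a solution $\phi>0$ on all of $(y_{1},\infty)$ rather than merely one with finitely many zeros. The Gaussian factor in $p$ is the obstruction: it breaks scale invariance and defeats the naive power supersolutions $\phi=y^{-\alpha}$, since the term from differentiating $e^{-y^{2}/4}$ has the wrong sign for large $y$, so the classical Hardy constant cannot simply be quoted. I would therefore establish disconjugacy directly through the Riccati equation $u'+u^{2}+\big(\tfrac{d-1}{y}-\tfrac{y}{2}\big)u+\tfrac{d-1}{y^{2}}=0$ for $u=\phi'/\phi$, starting from the principal indicial root $u\sim\alpha_{+}/y$ at the origin (real for $d\ge7$) and showing the solution persists without blowing up, using the favorable sign of $-\tfrac{y}{2}u$ at large $y$; the two auxiliary facts to nail down are the range control $f<\pi$ on the working interval and the harmless behavior of all boundary terms under the Gaussian weight. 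Once the positive $\phi$ is in hand, the clash between the nonlinear energy identity and the Hardy inequality closes the proof for every $d\ge7$.
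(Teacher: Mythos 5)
Your overall architecture (first equator crossing, then an energy identity on $(y_1,\infty)$, then a weighted Hardy inequality with constant $d-1$) is coherent, and it is genuinely different from the paper's argument; but the step you flag as ``the hard part'' is not merely hard --- it is false. For $d\ge 7$ the equator linearization $(p\phi')'+(d-1)p\,y^{-2}\phi=0$ has \emph{no} solution that is positive on all of $(0,\infty)$, and correspondingly the inequality $\int_{y_1}^{\infty}p(w')^2\,dy\ge (d-1)\int_{y_1}^{\infty}p\,y^{-2}w^2\,dy$ fails whenever $y_1$ is small. This is exactly the content of the paper's Remark~2: the Morse index of the equator map is \emph{one}, not zero, for $d\ge7$, i.e.\ the Hessian $Q(w)=\int_0^\infty p\,[(w')^2-(d-1)y^{-2}w^2]\,dy$ has a negative direction; by the very Picone/ground-state identity you invoke, that is incompatible with a globally positive $\phi$. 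You can see the failure in closed form at $d=7$: the solution emanating from the principal indicial root $\alpha_+=-2$ is $\phi(y)=y^{-2}-\tfrac16$, which solves $\phi''+\bigl(\tfrac6y-\tfrac y2\bigr)\phi'+\tfrac6{y^2}\phi=0$, vanishes at $y=\sqrt6$, and is negative thereafter; your Riccati variable $u=\phi'/\phi$ blows up to $-\infty$ at $y=\sqrt6$, precisely the event you hoped to exclude. The same happens for every $d\ge 7$: writing $\phi=y^{\alpha_+}\psi$ turns the linearization into $(P\psi')'=\tfrac{\alpha_+}{2}P\psi$ with $P=y^{D-1}e^{-y^2/4}$, $D=2+\sqrt{d^2-8d+8}$, and since $\alpha_+<0$ a positive $\psi$ would make $P\psi'$ strictly decreasing from $0$, forcing $\psi\to-\infty$ because $\int^\infty P^{-1}=\infty$. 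The discriminant $d^2-8d+8\ge0$ only makes the exponents at $y=0$ real (non-oscillation \emph{near the origin}); it says nothing about disconjugacy on the whole half-line, and the Gaussian factor whose ``wrong sign'' you noticed is not a technical nuisance but a genuine obstruction.

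As a result, your contradiction closes only if the first crossing satisfies $y_1\ge\sqrt6$ (for $d=7$; an analogous threshold for larger $d$), and there is no a priori lower bound on $y_1$: near $y=0$ equation \eqref{eqf} is approximately the scale-invariant harmonic map equation, so crossings can occur arbitrarily early as the shooting parameter $a$ grows. (A smaller gap: your argument that $f$ must reach $\pi/2$ assumes $0<f<\pi/2$ throughout and ignores exit through $0$ with subsequent oscillation about $0$; that case can be handled by multiplying the equation by $f$ itself and integrating, but it must be addressed. Conversely, your worry about $f<\pi$ is unnecessary, since $w\sin(2w)\le 2w^2$ holds for all $w$.) The paper's proof sidesteps spectral positivity entirely: setting $h=y^3f'$, it derives $y^2h''=\alpha(y)h'+\beta(y)h$ with $\beta\ge0$ exactly when $d\ge7$, shows $h'>0$ for all $y$ by a first-zero argument (with a fourth-derivative computation in the borderline case $d=7$, $f(y_0)=\pi/2$), and concludes $h\to\infty$, contradicting \eqref{bcinf}. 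That pointwise convexity mechanism is what your variational setup is missing, and it cannot be recovered from the quadratic form alone, since for $d\ge 7$ that form is indefinite.
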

 \begin{proof}
  The proof is extremely simple. Suppose that $f(y)$ is a global solution satisfying \eqref{bc0} and define
the function $h(y)=y^3 f'(y)$.
Multiplying  Eq.\eqref{eqf} by $y^2$ and differentiating we get
\begin{equation}\label{eqh}
  y^2 h''=\alpha(y) h' + \beta(y) h\,,
\end{equation}
where
\begin{equation}\label{ab}
   \alpha(y)=\frac{1}{2} y (y^2-2d+10)\quad \mbox{and}\quad \beta(y)=d-7+(d-1)(1+\cos{2f})\,.
\end{equation}
We assume that $d\geq 7$, so $\beta(y) \geq 0$.
 It follows from \eqref{bc0} that $h(0)=h'(0)=h''(0)=0$ and $h'''(0)=6a>0$, hence $h'(y)>0$ for small $y$.
   We now show that $h'(y)$ cannot go to zero. Suppose otherwise and let $y_0$ be the first point at which $h'(y_0)=0$. If $d>7$ or $f(y_0)\neq \pi/2$, then $\beta(y_0)>0$ and therefore $h''(y_0)=\beta(y_0) h(y_0)>0$, contradicting that $y_0$ exists. If $d=7$ and $f(y_0)= \pi/2$, then $\beta(y_0)=0$ and $h''(y_0)=0$, so a bit more work is needed. In this case,
  differentiating Eq.\eqref{eqh} we find that $h'''(y_0)=0$ and differentiating once more we get $ h^{(iv)}(y_0)=24 y_0^{-8} \, h^3(y_0)>0$, again contradicting the existence of $y_0$.
   Thus $h'(y)>0$ for all $y$. From this, \eqref{eqh}, and \eqref{ab}  we obtain
\begin{equation}\label{hbis}
  y^2 h''(y)=\alpha(y) h'(y)+ \beta(y) h(y)>0\quad \mbox{for}\,\,y> \sqrt{2d-10}\,.
\end{equation}
 Therefore, $\lim_{y\rightarrow \infty} h(y)=\infty$, contradicting \eqref{bcinf}.
\end{proof}
We conclude with a few remarks.
\begin{rem}
Eq.\eqref{eqf} is the Euler-Lagrange equation for the functional
\begin{equation}\label{e}
    \mathcal{E}(f)=\int_0^{\infty}  \left(f'^2+\frac{d-1}{y^2} \sin^2\!{f} \right)\, y^{d-1}\, e^{-y^2/4}\, dy\,,
\end{equation}
which can be interpreted as the Dirichlet energy for  maps from $\mathbb{R}^d$ with the conformally flat metric $g=e^{-\frac{y^2}{2(d-2)}} g_{flat}$ into $S^d$. Thus, shrinkers can be viewed as harmonic maps from $(\mathbb{R}^d,g)$ into $S^d$.  Note that  $\mathcal{E}(f)$ is invariant under the reflection symmetry $f\to \pi-f$ and the equator map $f_e=\pi/2$ is the only fixed point of this symmetry. For this kind of functional
Corlette and Wald conjectured in \cite{cw}, using Morse theory arguments,  that the number of  critical points (counted without multiplicity) with energy below $\mathcal{E}(f_e)$ is equal to the Morse index of $f_e$ (i.e. the number of negative eigenvalues of the Hessian of $\mathcal{E}$ at $f_e$).  In the case of \eqref{e}, the Morse index of $f_e$  drops from infinity to two at $d=4+2\sqrt{2}$ and then from two to one at $d=7$ (see \cite{b}). Thus, according to the conjecture of Corlette and Wald, for $d\geq 7$ there should be exactly one (modulo the reflection symmetry) critical point of $\mathcal{E}(f)$ (this unique critical point is, of course, $f=0$), in perfect agreement with Theorem~1.
\end{rem}
\begin{rem}
Struwe showed that the type I singularities are asymptotically self-similar \cite{s2}, that is their profile is given by a smooth shrinker.
Therefore, Theorem~1 implies that in dimensions $d\geq 7$ all singularties for the equivariant harmonic map flow \eqref{eqv} must be of type II (see \cite{b} for a recent analysis of such singularities).
\end{rem}
\begin{rem}
It is well-known that there are close parallels between the harmonic map and  Yang-Mills heat flows \cite{g2}. For the spherically symmetric magnetic Yang-Mills potential $w(t,r)$ in $d\geq 3$ dimensions  a counterpart of Eq.\eqref{eqv} reads
     \begin{equation}\label{ym}
  w_t=  w_{rr} +\frac{d-3}{r}\, w_r- \frac{d-2}{r^2}\, w (w-1) (w-2)\,,
\end{equation}
and a counterpart of Eq.\eqref{eqf} for shrinkers $w(t,r)=g(y)$ is
\begin{equation}\label{eqg}
   g''+\left(\frac{d-3}{y}-\frac{y}{2}\right)\,g'-\frac{d-2}{y^2}\,g (g-1) (g-2)=0\,.
\end{equation}
The one-parameter families of local smooth solutions of this equation near the origin and near infinity satisfy
\begin{equation}\label{bc0ym}
  g(0)=g'(0)=0\,,\quad g''(0)=a>0\,,
\end{equation}
and
\begin{equation}\label{bcinfym}
  g(\infty)=b\,,\quad \lim_{y\rightarrow \infty} y^3 g'(y)=-2(d-2) b (b-1) (b-2)\,.
\end{equation}
Using a similar shooting technique as in \cite{fan} one can easily show that for $5\leq d\leq 9$ there are infinitely many shrinkers $g_n(y)$. One novel feature, in comparison with the harmonic map flow, is that the first  shrinker is known explicitly \cite{w}:
\begin{equation}\label{g1}
g_1(y)=\frac{y^2}{\gamma+\delta y^2},\quad
\gamma=\frac{1}{2}(6d-12-(d+2)\sqrt{2d-4})\,,\quad \delta=\frac{\sqrt{d-2}}{2\sqrt{2}}\,.
\end{equation}
In complete analogy to Theorem~1 we have:
\begin{theorem}
 For $d\geq 10$ there exists no smooth solution of equation \eqref{eqg}  satisfying the conditions \eqref{bc0ym} and \eqref{bcinfym}.
 \end{theorem}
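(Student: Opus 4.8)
The plan is to follow the argument of Theorem~1 line for line, replacing the harmonic-map nonlinearity by the Yang--Mills one. I would again introduce $h(y)=y^3 g'(y)$, which is exactly the quantity whose limit at infinity is prescribed by \eqref{bcinfym}; thus a proof that $h(y)\to\infty$ will contradict \eqref{bcinfym} and finish the argument. Multiplying \eqref{eqg} by $y^2$ and differentiating once in $y$ should produce a linear equation of the form $y^2 h''=\alpha(y)h'+\beta(y)h$. Writing $P(g)=g(g-1)(g-2)$, the derivative of the nonlinear term contributes $(d-2)P'(g)=(d-2)(3g^2-6g+2)$ to the zeroth-order coefficient, and a routine computation (identical in structure to the one yielding \eqref{ab}) should give $\alpha(y)=\tfrac{1}{2}y(y^2-2d+14)$ and, after using $3g^2-6g+2=3(g-1)^2-1$, the clean expression $\beta(y)=d-10+3(d-2)(g-1)^2$.

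The decisive point is the sign of $\beta$. Since $(g-1)^2\geq 0$, one has $\beta(y)\geq d-10\geq 0$ for $d\geq 10$, with equality forcing both $d=10$ and $g=1$. This is the precise analog of the condition $d\geq 7$ with degeneracy at the equator $f=\pi/2$ in Theorem~1, and it is what fixes the threshold $d=10$ in the statement. Next I would examine the origin: the conditions \eqref{bc0ym} give $g(y)=\tfrac{a}{2}y^2+O(y^4)$, hence $h(y)=ay^4+O(y^5)$, so $h$ and its first three derivatives vanish at $0$ while $h^{(iv)}(0)=24a>0$; in particular $h'(y)>0$ for small $y>0$.

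I then want to show $h'$ never vanishes. Suppose $y_0$ is the first zero of $h'$; then $h(y_0)>0$. If $d>10$ or $g(y_0)\neq 1$ we have $\beta(y_0)>0$, so the equation gives $h''(y_0)=y_0^{-2}\beta(y_0)h(y_0)>0$, contradicting the minimality of $y_0$. In the borderline case $d=10$, $g(y_0)=1$ one has $\beta(y_0)=0$ and $h''(y_0)=0$, and here I would differentiate the $h$-equation twice. Since $\beta'=6(d-2)(g-1)g'$ vanishes at $y_0$, one gets $h'''(y_0)=0$; and since $\beta''(y_0)=6(d-2)g'(y_0)^2=48\,h(y_0)^2/y_0^6$, one obtains $h^{(iv)}(y_0)=48\,h(y_0)^3/y_0^8>0$, once more contradicting the existence of $y_0$. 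Hence $h'>0$ for all $y>0$, and with $h(0)=0$ also $h>0$.

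Finally, for $y>\sqrt{2d-14}$ the coefficient $\alpha$ is positive, so $y^2 h''=\alpha h'+\beta h>0$ using $h,h'>0$ and $\beta\geq 0$; therefore $h(y)\to\infty$, contradicting \eqref{bcinfym}. I expect the only genuinely delicate step to be this borderline case $d=10$, $g=1$, where $\beta$ degenerates and one must pass to higher derivatives exactly as in the $d=7$ subcase of Theorem~1. A secondary point worth verifying carefully is the precise constant in $\beta$, since the entire argument rests on the factorization $\beta=d-10+3(d-2)(g-1)^2$, and hence on the fact that $P'(g)$ attains its minimum exactly at the fixed point $g=1$ of the cubic nonlinearity.
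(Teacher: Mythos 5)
Your proposal is correct and takes exactly the paper's route: the paper proves Theorem~2 by saying ``the same as for Theorem~1'' with the coefficients $\alpha(y)=\tfrac{1}{2}y(y^2-2d+14)$ and $\beta(y)=d-10+3(d-2)(1-g)^2$, which are precisely the ones you derive, and your sign analysis of $\beta$, the first-zero argument for $h'$, and the growth argument for $y>\sqrt{2d-14}$ all match. Your explicit treatment of the degenerate case $d=10$, $g(y_0)=1$ (obtaining $h'''(y_0)=0$ and $h^{(iv)}(y_0)=48\,h(y_0)^3/y_0^8>0$) is the correct analog of the paper's $d=7$ subcase, filling in a detail the paper leaves implicit.
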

 \begin{proof}
 The same as for Theorem~1. The only change is that now the function $h(y)=y^3 g'(y)$ satisfies Eq.\eqref{eqh} with different coefficients
 \begin{equation}\label{abym}
   \alpha(y)=\frac{1}{2} y (y^2-2d+14)\quad \mbox{and}\quad \beta(y)=d-10+3(d-2)(1-g)^2\,.
\end{equation}
Note that for $d=10$ the solution \eqref{g1} becomes $g_1=1$ (which does not satisfy the regularity condition at the origin \eqref{bc0ym}), while for $d>10$ the parameter $\gamma$ is negative so $g_1(y)$ has a pole at $y=(-\gamma/\delta)^{1/2}$.
 \end{proof}
\end{rem}
By  arguments analogous to the ones given  in Remarks~1 and~3, it follows that  in dimensions $d\geq 10$ all singularities for the equivariant Yang-Mills flow \eqref{ym} must be of type II.

\end{document}